\newtheorem{theorem}{Theorem}[section]
\newtheorem{lemma}[theorem]{Lemma}
\newtheorem{corollary}[theorem]{Corollary}
\theoremstyle{definition}
\newtheorem{remark}[theorem]{Remark}
\begin{document}

\title[Another proof of the Nowicki conjecture]
{Another proof of the Nowicki Conjecture}
\author[Vesselin Drensky]
{Vesselin Drensky}
\address{Institute of Mathematics and Informatics,
Bulgarian Academy of Sciences,
1113 Sofia, Bulgaria}
\email{drensky@math.bas.bg}

\subjclass[2010]{13N15; 13A50; 15A72; 20G05; 22E46.}
\keywords{algebras of constants; Weitzenb\"ock derivations; Nowicki conjecture.}

\begin{abstract}
Let $K[X_d,Y_d]=K[x_1,\ldots,x_d,y_1,\ldots,y_d]$ be the polynomial algebra in $2d$ variables over a field $K$ of characteristic 0
and let $\delta$ be the derivation of $K[X_d,Y_d]$ defined by
$\delta(y_i)=x_i$, $\delta(x_i)=0$, $i=1,\ldots,d$.
In 1994 Nowicki conjectured that the algebra $K[X_d,Y_d]^{\delta}$ of constants of $\delta$
is generated by $X_d$ and $x_iy_j-y_ix_j$ for all $1\leq i<j\leq d$.
The affirmative answer was given by several authors using different ideas.
In the present paper we give another proof of the conjecture based on representation theory of
the general linear group $GL_2(K)$.
\end{abstract}

\maketitle

\section{Introduction}
The linear operator $\delta$ of an algebra $A$ over a field $K$ is a derivation if
it satisfies the Leibniz rule
\[
\delta(a_1a_2)=\delta(a_1)a_2+a_1\delta(a_2),\quad a_1,a_2\in A.
\]
The kernel of $\delta$ is called the algebra of constants of $\delta$ and is denoted by $A^{\delta}$.
In the sequel $K$ is a field of characteristic 0. When $A=K[Z_m]=K[z_1,\ldots,z_m]$ is the algebra of
polynomials in $m$ variables the derivation $\delta$ is called Weitzenb\"ock if
it acts as a nilpotent linear operator on the vector space $KZ_m$ with basis $Z_m=\{z_1,\ldots,z_m\}$.
The classical theorem of Weitzenb\"ock \cite{Wei} in 1932 states that in this case $K[Z_m]^{\delta}$ is finitely generated.
The algebra $K[Z_m]^{\delta}$ coincides with the algebra of invariants $K[Z_m]^{(K,+)}$, where
the additive group $(K,+)$ of the field $K$ is
embedded as a subgroup into the unitriangular group $UT_m(K)$ acting as $\{\exp(\alpha\delta)\mid\alpha\in K\}$.
Hence the finitely generation of $K[Z_m]^{\delta}$ is equivalent to a theorem of classical invariant theory.
A modern geometric proof of the Weitzenb\"ock theorem in this spirit is given by Seshadri \cite{S}.
A translation in an algebraic language of this proof is given by Tyc \cite{T}.
For more information on Weitzenb\"ock derivations
one can see the books by Nowicki \cite[Section 6.2]{N},  Derksen and Kemper \cite[Chapter 2]{DK}, and Dolgachev \cite[Section 4.2]{D}.

In the special case of the polynomial algebra $K[X_d,Y_d]$ in $2d$ variables
$X_d=\{x_1,\ldots,x_d\}$ and $Y_d=\{y_1,\ldots,y_d\}$ when the Weitzenb\"ock derivation $\delta$ acts by
\begin{equation}\label{action of delta}
\delta(x_i)=0,\quad\delta(y_i)=x_i,\quad i=1,\ldots,d,
\end{equation}
Nowicki \cite{N} conjectured in 1994 that
$K[X_d,Y_d]^{\delta}$ is generated by $X_d$ and the determinants
\begin{equation}\label{determinants}
u_{ij}=\left\vert\begin{matrix}x_i&x_j\\
y_i&y_j\\
\end{matrix}\right\vert=x_iy_j-x_jy_i,\quad 1\leq i<j\leq d.
\end{equation}
There are several proofs based on different methods confirming the Nowicki conjecture:
by Khoury \cite{K1, K2},  Bedratyuk \cite{B},
the author and Makar-Limanov \cite{DML}, Kuroda \cite{Ku}. There are unpublished proofs by Derksen and Panyushev.
As Kuoda mentions in his paper \cite{Ku} Goto, Hayasaka, Kurano, and Nakamura \cite{GHKN} and Miyazaki \cite{M}
determined sets of generators for algebras of invariants with $K[X_d,Y_d]^{\delta}$ included in the list.

In the present paper we give a new proof using easy arguments from representation theory of the general linear group $GL_2(K)$.
Our proof is inspired by our paper with Gupta \cite{DG} devoted to the noncommutative version of
Weitzenb\"ock derivations.

\section{Preliminaries}

Let $V$ be a vector space with basis $\{x,y\}$ with the canonical action of the general linear group $GL_2(K)$:
\begin{equation}\label{canonical action of GL}
g(x)=\gamma_{11}x+\gamma_{21}y,\quad
g(y)=\gamma_{12}x+\gamma_{22}y, \text{ where }g=\left(\begin{matrix}\gamma_{11}&\gamma_{12}\\
\gamma_{21}&\gamma_{22}\\
\end{matrix}\right)\in GL_2(K).
\end{equation}
For a background on representation theory of the general linear group $GL_m(K)$ see, e.g.,
the books by Weyl \cite[Chapter 4]{Wey} or James and Kerber \cite[Chapter 8]{JK}.
We shall summarize the necessary facts for $m=2$ only.
The polynomial representations of $GL_2(K)$ are completely reducible and their irreducible components
$W(\lambda)$ are indexed by partitions $\lambda=(\lambda_1,\lambda_2)$. If $\lambda$ is a partition of $n$
(notation $\lambda\vdash n$), then
$W(\lambda)$ can be realized as a $GL_2(K)$-submodule of the $n$-th tensor degree $V^{\otimes_n}$
equipped with the diagonal action of $GL_2(K)$
\[
g(v_1\otimes\cdots\otimes v_n)=g(v_1)\otimes\cdots\otimes g(v_n),\quad v_i\in V,\quad g\in GL_2(K).
\]
As a vector space $V^{\otimes_n}$ is ${\mathbb N}_0^2$-graded and the homogeneous component of degree $(n_1,n_2)$, $n_1+n_2=n$,
is spanned on the tensors $z_{i_1}\otimes\cdots\otimes z_{i_n}$, $z_i=x,y$, of degree $n_1$ and $n_2$ in $x$ and $y$, respectively.
Then $W(\lambda)$ has a basis of homogeneous elements
\begin{equation}\label{basis of W}
\{w_0,w_1,\ldots,w_{\lambda_1-\lambda_2}\},\quad \deg w_i=(\lambda_1-i,\lambda_2+i),\quad i=0,1,\ldots,\lambda_1-\lambda_2.
\end{equation}
The element $w_0=w(\lambda)\in W(\lambda)$ which is homogeneous of degree $\lambda$ is called the highest weight vector of $W(\lambda)$.
One typical element $w(\lambda)^{(1)}\in W(\lambda)^{(1)}\subset V^{\otimes_n}$, $W(\lambda)^{(1)}\cong W(\lambda)$, is
\begin{equation}\label{special HWV}
w(\lambda)^{(1)}=\underbrace{(x\otimes y-y\otimes x)\otimes \cdots\otimes (x\otimes y-y\otimes x)}_{\lambda_2\text{ times}}
\otimes\underbrace{x\otimes\cdots\otimes x}_{\lambda_1-\lambda_2\text{ times}}.
\end{equation}
Here the skew-symmetric sums $(x\otimes y-y\otimes x)$ appear in positions $(1,2), (3,4),\ldots$, $(2\lambda_2-1,2\lambda_2)$.
The symmetric group $S_n$ acts from the right on $V^{\otimes_n}$ by place permutation
\[
(v_1\otimes\cdots\otimes v_n)\sigma^{-1}=v_{\sigma(1)}\otimes\cdots\otimes v_{\sigma(n)},\quad v_i\in V,\quad \sigma\in S_n.
\]
Then every highest weight vector $w(\lambda)\in W(\lambda)\subset V^{\otimes_n}$ is of the form
\begin{equation}\label{general HWV}
w(\lambda)=w(\lambda)^{(1)}\sum_{\sigma\in S_n}\alpha_{\sigma}\sigma^{-1},\quad\alpha_{\sigma}\in K.
\end{equation}
Clearly, the skew-symmetries in $w(\lambda)^{(1)}\sigma$ are in positions
\[
(\sigma(1),\sigma(2)), (\sigma(3),\sigma(4)),\ldots, (\sigma(2\lambda_2-1),\sigma(2\lambda_2)).
\]

\begin{remark}
Since $W(\lambda)$, $\lambda\vdash n$, participates in $V^{\otimes_n}$ with multiplicity equal to the number of standard tableaux
of shape $[\lambda]$, by \cite[Proposition 0.1]{DR}
we may choose a basis of the vector space of highest weight vectors $w(\lambda)\in V^{\otimes_n}$ consisting of all
$w(\lambda)^{(1)}\sigma^{-1}$ such that the tableau
\begin{center}
\ytableausetup
{mathmode, boxsize=5em}
\begin{ytableau}\sigma(1)&\sigma(3)&\cdots&\sigma(2\lambda_2-1)&\sigma(2\lambda_2+1)&\cdots&\sigma(n)\\
\sigma(2)&\sigma(4)&\cdots&\sigma(2\lambda_2)
\end{ytableau}
\end{center}
is standard, i.e.,
\[
\sigma(1)<\sigma(2),\sigma(3)<\sigma(4),\ldots,\sigma(2\lambda_2-1)<\sigma(2\lambda_2),
\]
\[
\sigma(1)<\sigma(3)<\cdots <\sigma(2\lambda_2-1)<\sigma(2\lambda_2+1)<\sigma(2\lambda_2+2)<\cdots<\sigma(n),
\]
\[
\sigma(2)<\sigma(4)<\cdots< \sigma(2\lambda_2).
\]
\end{remark}

The highest weight vector of $W(\lambda)\subset V^{\otimes_n}$ can be characterized in the following way,
see \cite[Lemma 1.1]{BD}.

\begin{lemma}\label{characterization of HWV} Let $\Delta$ be the derivation of the tensor algebra
\[
T(V)=\bigoplus_{n\geq 0}V^{\otimes_n}=K\oplus V\oplus (V\otimes V)\oplus (V\otimes V\otimes V)\oplus\cdots
\]
defined by $\Delta(x)=0$, $\Delta(y)=x$. Then the homogeneous element $w\not=0$ of degree $\lambda\vdash n$ is
a highest weight vector of some $W(\lambda)\subset V^{\otimes_n}$ if and only if $\Delta(w)=0$.
\end{lemma}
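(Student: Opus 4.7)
The plan is to interpret $\Delta$ as the infinitesimal generator of the one-parameter unipotent subgroup $U=\{g(\alpha)\mid\alpha\in K\}\subset GL_2(K)$, where $g(\alpha)$ is the upper unitriangular matrix with superdiagonal entry $\alpha$, and then to invoke the standard characterization of highest weight vectors in polynomial $GL_2(K)$-modules as the $U$-fixed vectors.

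First I would verify that $g(\alpha)=\exp(\alpha\Delta)$ on the tensor algebra $T(V)$. On the generators this is immediate: $g(\alpha)(x)=x$ and $g(\alpha)(y)=y+\alpha x$. Both $g(\alpha)$ (acting diagonally) and $\exp(\alpha\Delta)$ are algebra endomorphisms of $T(V)$---the latter because $\Delta$ is a derivation, locally nilpotent on each $V^{\otimes n}$ with nilpotency index bounded by $n+1$---so the identity propagates from $V$ to all of $T(V)$. In characteristic zero the local nilpotency gives the equivalence $\Delta w=0\Leftrightarrow g(\alpha)w=w$ for all $\alpha\in K$, so the problem reduces to showing that a bihomogeneous $w\ne 0$ of degree $\lambda$ is a highest weight vector precisely when it is $U$-fixed.

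For the reduction I would use the isotypic decomposition $V^{\otimes n}=\bigoplus_{\mu\vdash n}W(\mu)^{\oplus m_\mu}$, which is simultaneously $GL_2(K)$-equivariant and ${\mathbb N}_0^2$-graded. Writing $w=\sum_\mu w_\mu$, each $w_\mu$ is bihomogeneous of degree $\lambda$, and within a single copy of $W(\mu)$ the weight-$\lambda$ subspace is at most one-dimensional, spanned by the basis element $w_{\mu_1-\lambda_1}$ of \eqref{basis of W}. Since $\Delta$ preserves each submodule $W(\mu)$, it acts there as a raising operator, killing $w_0$ and sending $w_i$ to a nonzero scalar multiple of $w_{i-1}$ for $i\geq 1$. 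Hence $\Delta w_\mu=0$ forces $\mu_1-\lambda_1=0$, i.e., $\mu=\lambda$; as the components $\Delta w_\mu$ sit in pairwise distinct isotypic summands, $\Delta w=0$ implies $w_\mu=0$ for $\mu\ne\lambda$, so $w=w_\lambda$ is a sum of highest weight vectors from the $m_\lambda$ copies of $W(\lambda)$. The converse is direct: any highest weight vector $w(\lambda)$ has $\Delta w(\lambda)\in W(\lambda)$ of bidegree $(\lambda_1+1,\lambda_2-1)$, which is not among the weights of $W(\lambda)$, so $\Delta w(\lambda)=0$.

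The main obstacle I anticipate is the nonvanishing of the raising-operator coefficients on each irreducible summand $W(\mu)$ in characteristic zero. This is standard $\mathfrak{sl}_2$-representation theory for the element $E_{12}$, but to keep the argument self-contained one should verify it directly, for instance by applying the derivation rule for $\Delta$ to the explicit highest weight vector \eqref{special HWV} and tracking how successive applications of $\Delta$ generate the basis of \eqref{basis of W} with nonzero coefficients.
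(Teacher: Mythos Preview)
The paper does not supply its own proof of this lemma; it simply cites \cite[Lemma~1.1]{BD}. So there is nothing to compare against, and the relevant question is only whether your argument stands on its own. It does: the isotypic decomposition together with the fact that $\Delta$ acts on each irreducible $W(\mu)$ as a raising operator with nonzero coefficients (the content of Remark~\ref{action on basis of W}, and indeed standard $\mathfrak{sl}_2$-theory for $E_{12}$) gives exactly what is needed. One small point worth making explicit is that a nonzero weight-$\lambda$ element of the isotypic block $W(\lambda)^{\oplus m_\lambda}$ is genuinely a highest weight vector of some irreducible submodule, which follows because any nonzero linear combination $\sum_i c_i w_0^{(i)}$ lies in the image of the diagonal embedding $W(\lambda)\hookrightarrow W(\lambda)^{\oplus m_\lambda}$, $v\mapsto(c_1v,\ldots,c_{m_\lambda}v)$.

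A stylistic remark: your first paragraph establishes the equivalence $\Delta w=0\Leftrightarrow w$ is $U$-fixed and announces a reduction to the characterization of highest weight vectors as $U$-invariants, but your second paragraph then bypasses this and argues directly with $\Delta$ via the raising-operator picture. Both routes are valid, but as written the unipotent-group discussion is never actually invoked. Its real utility in your setup is to justify that $\Delta$ preserves each $GL_2(K)$-submodule $W(\mu)$ (since every $g(\alpha)$ does, and $\Delta$ is recovered from $\exp(\alpha\Delta)$ by a Vandermonde argument); if you make that link explicit, the two paragraphs mesh cleanly.
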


\begin{remark}\label{action on basis of W}
Up to a nonzero multiplicative constant the derivation $\Delta$ sends the element $w_i$ from (\ref{basis of W}) to $w_{i-1}$,
$i=1,\ldots,\lambda_1-\lambda_2$, and $\Delta(w_0)=0$.
\end{remark}

Let $n=(n_1,\ldots,n_d)$ be a $d$-tuple of nonnegative integers and let $\vert n\vert=n_1+\cdots+n_d$.
Consider the vector spaces $V_1,\ldots,V_d$ with bases $\{x_1,y_1\},\ldots,\{x_d,y_d\}$, respectively,
and the canonical action of $GL_2(K)$ as in (\ref{canonical action of GL}) on them. Clearly,
the tensor products $V^{(n)}=V_1^{\otimes_{n_1}}\otimes\cdots\otimes V_d^{\otimes_{n_d}}$ and $V^{\otimes_{\vert n\vert}}$ are
isomorphic as $GL_2(K)$-modules. As in the case of $V^{\otimes_n}\subset T(V)$ we define an ${\mathbb N}_0^2$-grading on
$V^{(n)}$ which counts the number of entries of $X_d$ and $Y_d$, respectively.
Let $\lambda\vdash\vert n\vert$. If at the first $\lambda_2$ couples of positions $(1,2),(3,4),\ldots,(2\lambda_2-1,2\lambda_2)$
in the tensor product $V^{(n)}$ we have $(V_{i_1},V_{j_1}),(V_{i_2},V_{j_2}),\ldots,(V_{i_{\lambda_2}},V_{j_{\lambda_2}})$,
and the positions left are $k_1,\ldots,k_{\lambda_1-\lambda_2}$,
then the analogue of the equation (\ref{special HWV}) is
\begin{equation}\label{version of special HWV}
(x_{i_1}\otimes y_{j_1}-y_{i_1}\otimes x_{j_1})\otimes
\cdots\otimes (x_{i_{\lambda_2}}\otimes y_{j_{\lambda_2}}-y_{i_{\lambda_2}}\otimes x_{j_{\lambda_2}})
\otimes x_{k_1}\otimes\cdots\otimes x_{k_{\lambda_1-\lambda_2}}.
\end{equation}
The equation (\ref{general HWV}) also can be restated in a similar way.
As a consequence of Lemma \ref{characterization of HWV} we obtain:

\begin{corollary}\label{HWV for tensor products}
Let $\delta$ be the derivation of the tensor algebra $T(V_1\oplus\cdots\oplus V_d)$ defined by
{\rm (\ref{action of delta})}. Then a homogeneous element $w\in V^{(n)}=V_1^{\otimes_{n_1}}\otimes\cdots\otimes V_d^{\otimes_{n_d}}$
of degree $\lambda\vdash\vert n\vert$ is a highest weight vector of a submodule $W(\lambda)$ of
$V^{(n)}$ if and only if $\delta(w)=0$.
\end{corollary}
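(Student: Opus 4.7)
The plan is to deduce this directly from Lemma \ref{characterization of HWV} by transporting the statement along the natural isomorphism $V^{(n)}\cong V^{\otimes_{\vert n\vert}}$ of $GL_2(K)$-modules. For each $i$, the vector space $V_i$ with basis $\{x_i,y_i\}$ carries exactly the same $GL_2(K)$-action as $V$ with basis $\{x,y\}$, so the linear map $\varphi_i:V_i\to V$ sending $x_i\mapsto x$ and $y_i\mapsto y$ is a $GL_2(K)$-isomorphism. Tensoring gives a $GL_2(K)$-isomorphism
\[
\Phi:V^{(n)}=V_1^{\otimes_{n_1}}\otimes\cdots\otimes V_d^{\otimes_{n_d}}\longrightarrow V^{\otimes_{\vert n\vert}},
\]
which moreover preserves the $\mathbb N_0^2$-grading that counts entries from $X_d\cup Y_d$ versus from $\{x,y\}$. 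Hence $\Phi$ sends homogeneous elements of degree $\lambda$ in $V^{(n)}$ bijectively onto homogeneous elements of degree $\lambda$ in $V^{\otimes_{\vert n\vert}}$, and under $\Phi$ a submodule $W(\lambda)\subset V^{(n)}$ corresponds to a submodule $W(\lambda)\subset V^{\otimes_{\vert n\vert}}$ with highest weight vectors corresponding to highest weight vectors.

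Next I would check that $\Phi$ intertwines the derivations $\delta$ and $\Delta$. Both $\delta$ and $\Delta$ act on a pure tensor by the Leibniz rule, so it suffices to check the action on a single tensor factor. On $V_i$, the derivation $\delta$ sends $x_i\mapsto 0$ and $y_i\mapsto x_i$, while $\Delta$ on $V$ sends $x\mapsto 0$ and $y\mapsto x$. Since $\varphi_i(x_i)=x$ and $\varphi_i(y_i)=y$, we have $\Phi\circ\delta=\Delta\circ\Phi$ on each $V_j$, and extending by the Leibniz rule this identity propagates to all of $V^{(n)}$. In particular, for homogeneous $w\in V^{(n)}$ we have $\delta(w)=0$ if and only if $\Delta(\Phi(w))=0$.

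Combining the two steps, a homogeneous element $w\in V^{(n)}$ of degree $\lambda$ is a highest weight vector of some $W(\lambda)\subset V^{(n)}$ if and only if $\Phi(w)$ is a highest weight vector of some $W(\lambda)\subset V^{\otimes_{\vert n\vert}}$, which by Lemma \ref{characterization of HWV} happens if and only if $\Delta(\Phi(w))=0$, i.e., if and only if $\delta(w)=0$. This completes the argument.

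There is essentially no hard step here: the whole point is that once the $GL_2(K)$-module structure on $V^{(n)}$ has been defined so as to treat the $2d$ variables as $\vert n\vert$ copies of $\{x,y\}$, the corollary is just Lemma \ref{characterization of HWV} reindexed. The only mild subtlety is being explicit that $\Phi$ is compatible with both the $GL_2(K)$-action and the derivation; once that is spelled out, there is nothing left to prove.
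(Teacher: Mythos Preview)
Your proof is correct and follows exactly the approach the paper intends: the paper states the corollary as an immediate consequence of Lemma~\ref{characterization of HWV}, having already noted that $V^{(n)}$ and $V^{\otimes_{\vert n\vert}}$ are isomorphic as $GL_2(K)$-modules. You have simply made explicit the isomorphism $\Phi$ and the fact that it intertwines $\delta$ with $\Delta$, which is precisely the content the paper leaves implicit.
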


\section{The main result}

We are ready to present our proof of the Nowicki conjecture.

\begin{theorem}
Let $K$ be a field of characteristic $0$ and let $\delta$ be the derivation of the polynomial algebra
$K[X_d,Y_d]$ defined by {\rm (\ref{action of delta})}. Then the algebra of constants $K[X_d,Y_d]^{\delta}$
is generated by $X_d$ and the determinants {\rm (\ref{determinants})}.
\end{theorem}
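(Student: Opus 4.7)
The plan is to observe that $K[X_d,Y_d]$ is a semisimple $GL_2(K)$-module under the diagonal action of the canonical representation~(\ref{canonical action of GL}) on each $V_i$, and that $\delta$ is the corresponding lowering operator. Decomposing $K[X_d,Y_d]$ into irreducibles and applying Remark~\ref{action on basis of W} inside each copy of $W(\lambda)$, one concludes that $K[X_d,Y_d]^\delta$ coincides with the span of all highest weight vectors occurring in this decomposition. Now the subalgebra $R\subseteq K[X_d,Y_d]$ generated by $X_d$ together with the determinants $u_{ij}$ of~(\ref{determinants}) is contained in $K[X_d,Y_d]^\delta$, because $\delta(x_i)=\delta(u_{ij})=0$ and $\delta$ is a derivation. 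Hence it suffices to prove that every highest weight vector of $K[X_d,Y_d]$ lies in $R$.

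To describe these highest weight vectors explicitly, I fix a multi-degree $n=(n_1,\ldots,n_d)$ and consider the symmetrization map
\[
\pi\colon V^{(n)}=V_1^{\otimes n_1}\otimes\cdots\otimes V_d^{\otimes n_d}\twoheadrightarrow K[X_d,Y_d]_{(n)}
\]
onto the multi-homogeneous component of $K[X_d,Y_d]$ of multi-degree $n$. This is a $GL_2(K)$-equivariant surjection, so by complete reducibility $\ker\pi$ admits a $GL_2(K)$-invariant complement, and every highest weight vector of $K[X_d,Y_d]_{(n)}$ is the $\pi$-image of some highest weight vector $w\in V^{(n)}$. By Corollary~\ref{HWV for tensor products} together with the descriptions in~(\ref{version of special HWV}) and~(\ref{general HWV}), such a $w$ is a $K$-linear combination of translates $w(\lambda)^{(1)}\sigma^{-1}$ with $\sigma$ running over the symmetric group on $\vert n\vert$ letters.

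The key step is then to apply $\pi$ to one such translate: its skew pair at tensor positions $(\sigma(2m-1),\sigma(2m))$ involves a variable from some block $V_{i_m}$ and a variable from some block $V_{j_m}$, while the remaining positions are occupied by $x$'s from blocks $V_{k_1},\ldots,V_{k_{\lambda_1-\lambda_2}}$. Symmetrizing inside each block $V_i^{\otimes n_i}$ sends this translate either to $0$ (when $i_m=j_m$ for some $m$, since $x_iy_i-y_ix_i=0$) or to $\pm u_{i_1j_1}\cdots u_{i_{\lambda_2}j_{\lambda_2}}\,x_{k_1}\cdots x_{k_{\lambda_1-\lambda_2}}\in R$. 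Consequently every highest weight vector of $K[X_d,Y_d]_{(n)}$ is a linear combination of such monomials and hence lies in $R$, which establishes the theorem. The main (essentially bookkeeping) obstacle is this last step: one must track which tensor position lies in which block $V_i$ and verify that the many tensor-algebra highest weight vectors produced by inequivalent pairings collapse consistently under $\pi$, so that the image still has dimension equal to the (strictly smaller) multiplicity of $W(\lambda)$ in $K[X_d,Y_d]_{(n)}$, automatically encoding the right relations among the generators of $R$.
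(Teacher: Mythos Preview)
Your argument is essentially identical to the paper's: both pass to the $GL_2(K)$-equivariant surjection $V^{(n)}\twoheadrightarrow K[X_d,Y_d]^{(n)}$, identify $(K[X_d,Y_d]^{(n)})^{\delta}$ with the span of highest weight vectors via Lemma~\ref{characterization of HWV} and Remark~\ref{action on basis of W}, and then observe that each spanning highest weight vector~(\ref{version of special HWV}) maps to a product $u_{i_1j_1}\cdots u_{i_{\lambda_2}j_{\lambda_2}}x_{k_1}\cdots x_{k_{\lambda_1-\lambda_2}}$. Your final paragraph's worry about multiplicities and ``consistent collapse'' is unnecessary: once each $\pi\bigl(w(\lambda)^{(1)}\sigma^{-1}\bigr)$ lies in $R$, linearity finishes the proof, and no dimension count is needed.
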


\begin{proof}
The algebra $K[X_d,Y_d]$ has a canonical ${\mathbb N}_0^d$-grading. The homogeneous component $K[X_d,Y_d]^{(n)}$ of degree $n=(n_1,\ldots,n_d)$
is spanned by the monomials which are of degree $n_i$ in $x_i$ and $y_i$, $i=1,\ldots,d$.
It follows from the definition of $\delta$ that $\delta(K[X_d,Y_d]^{(n)})\subset K[X_d,Y_d]^{(n)}$.
Hence we shall prove the theorem if we show that each component $(K[X_d,Y_d]^{(n)})^{\delta}$ is spanned on the products
\begin{equation}\label{product of X and U}
X_d^pU_d^q=x_1^{p_1}\cdots x_d^{p_d}\prod_{1\leq i<j\leq d}u_{ij}^{q_{ij}}
\end{equation}
of degree $n$. As a $GL_2(K)$-module $K[X_d,Y_d]^{(n)}$ is isomorphic to the symmetric tensor power of
$n_1$ copies of $V_1$, $n_2$-copies of $V_2$, $\ldots$, $n_d$ copies of $V_d$. Hence it is a homomorphic image of
$V^{(n)}=V_1^{\otimes_{n_1}}\otimes\cdots\otimes V_d^{\otimes_{n_d}}$.
The action of $\delta$ on $V^{(n)}$ induces the canonical action on $K[X_d,Y_d]^{(n)}$.
Therefore the vector space of the highest weight vectors of $K[X_d,Y_d]^{(n)}$
is an image of the vector space of the highest weight vectors of $V^{(n)}$
and by Lemma \ref{characterization of HWV} and Remark \ref{action on basis of W}
coincides with $(K[X_d,Y_d]^{(n)})^{\delta}$.
The highest weight vectors of $V^{(n)}$ are linear combinations of the products (\ref{version of special HWV})
with the property that
\[
\{i_a,j_a,k_b\mid a=1,2,\ldots,\lambda_2,b=1,2,\ldots,\lambda_1-\lambda_2\}=\{1,2,\ldots,\vert n\vert\}.
\]
Obviously the image of the element (\ref{version of special HWV}) in $K[X_d,Y_d]^{(n)}$ is
\[
u_{i_1j_1}\cdots u_{i_{\lambda_2},j_{\lambda_2}}x_{k_1}\cdots x_{k_{\lambda_1-\lambda_2}}
\]
Replacing $u_{i_aj_a}$ with $u_{j_ai_a}$ if $i_a>j_a$,
we obtain that $(K[X_d,Y_d]^{(n)})^{\delta}$ is spanned on the products (\ref{product of X and U})
which completes the proof.
\end{proof}

\end{document}